\newtheorem{prop}{Proposition}
\newtheorem{lemma}[prop]{Lemma}
\theoremstyle{definition}
\newtheorem{rem}[prop]{Remark}
\newcommand{\half}{{\frac{1}{2}}}
\newcommand{\ZZ}{{\mathbb{Z}}}
\newcommand{\RR}{{\mathbb{R}}}
\newcommand{\interior}{\operatorname{int}}
\newcommand{\cR}{\mathcal{R}}
\newcommand{\cD}{\mathcal{D}}
\newcommand{\cS}{\mathcal{S}}
\newcommand{\Tw}{\operatorname{Tw}}
\newcommand{\ccol}{\operatorname{color}} 
\newcommand{\charge}{\operatorname{charge}}
\newcommand{\wind}{\operatorname{wind}}
\newcommand{\metricw}[2]{\operatorname{w}_{\operatorname{metric}}(#1,#2)}
\newcommand{\topw}[2]{\operatorname{w}_{\operatorname{top}}(#1,#2)}
\newcommand{\tv}{\vec{v}}
\newcommand{\vu}{\vec{u}}
\newcommand{\ex}{\vec{\mathbf{i}}}
\newcommand{\ey}{\vec{\mathbf{j}}}
\newcommand{\ez}{\vec{\mathbf{k}}}
\newcommand{\neighbor}{\mathcal{N}}
\newcommand{\myScaleVar}{1} 
\newcommand{\plshalf}[1]{{#1}^\sharp}
\newcommand{\subjclass}[2][2010]{%
  \let\@oldtitle\@title%
  \gdef\@title{\@oldtitle\footnotetext{#1 \emph{Mathematics Subject Classification.} #2}}%
}
\newcommand{\keywords}[1]{%
  \let\@@oldtitle\@title%
  \gdef\@title{\@@oldtitle\footnotetext{\emph{Key words and phrases.} #1.}}%
}
\date{November 3, 2014}
\begin{document}
\title{Twists for duplex regions}
\author{Pedro H. Milet \and Nicolau C. Saldanha} 

\maketitle
\begin{abstract}
This note relies heavily on \url{arXiv:1404.6509} and \url{arXiv:1410.7693}. Both articles discuss domino tilings of three-dimensional regions, and both are concerned with \emph{flips}, the local move performed by removing two parallel dominoes and placing them back in the only other possible position.
In the second article, an integer $\Tw(t)$ is defined for any tiling $t$ of a large class of regions $\cR$: it turns out that $\Tw(t)$ is invariant by flips. In the first article, a more complicated polynomial invariant $P_t(q)$ is introduced for tilings of two-story regions. It turns out that $\Tw(t) = P_t'(1)$ whenever $t$ is a tiling of a \emph{duplex region}, a special kind of two-story region for which both invariants are defined. This identity is proved in \url{arXiv:1410.7693} in an indirect and nonconstructive manner. In the present note, we provide an alternative, more direct proof.  
\end{abstract}
\section{Introduction}
We assume that the reader is familiar to the notations of \cite{primeiroartigo, segundoartigo}. In particular, a \emph{multiplex region with axis} $\ez$ (or $\ez$-\emph{multiplex}) is a region of the form $\cD + [0,N]\ez$, where $\cD \subset \RR^2 \times \{0\}$ is simply connected and has connected interior; if $N = 2$, the region is a ($\ez$-)\emph{duplex region}. The twist $\Tw(t)$ is an integer defined below for tilings of a multiplex, and the polynomial $P_t(q) \in \ZZ[q,q^{-1}]$ is defined for tilings of a duplex region via the formula \eqref{eq:definitionOfPt} below.


Recall that given $\vu \in \Phi = \{\pm \ex, \pm \ey, \pm \ez\}$, we define the $\vu$-\emph{pretwist} of a tiling $t$ of a region $\cR$ as 
$
T^{\vu}(t) = \sum_{d_0,d_1 \in t} \tau^{\vu}(d_0,d_1),
$
where $\tau^{\vu}$ denotes the effect along $\vu$, or 
$$\tau^{\vu}(d_0,d_1) = \begin{cases} \frac{1}{4} \det(\tv(d_1),\tv(d_0),\vu), &d_1 \cap \cS^{\vu}(d_0) \neq \emptyset \\ 0, &\mbox{otherwise.} \end{cases}$$
Here $\cS^{\vu}(d_0)$ denotes the open $\vu$-shade of $d_0$, as defined in Section 3 of \cite{segundoartigo}, and $\tv(d) \in \Phi$ denotes the center of the black cube contained in $d$ minus the center of the white one.

\begin{prop}
\label{prop:duplexes}
If $\cR$ is a duplex region, then, for any tiling $t$ of $\cR$,
$$P_t'(1) = T^{\ex}(t) = T^{\ey}(t) = T^{\ez}(t).$$ 
\end{prop}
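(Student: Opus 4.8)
The plan is to evaluate all four quantities directly from the planar projection of $t$. Split the dominoes of $t$ into \emph{vertical} ones (parallel to $\ez$) and \emph{horizontal} ones; every horizontal domino lies in the bottom floor $\cD+[0,1]\ez$ or the top floor $\cD+[1,2]\ez$, and the horizontal dominoes of the bottom (resp.\ top) floor project to a partial matching $M_0$ (resp.\ $M_1$) of $\cD$, the cells left uncovered by $M_0$ and by $M_1$ being exactly the projections of the vertical dominoes. Thus $M_0\cup M_1$ is a disjoint union of doubled edges (a bottom domino projecting onto a top one) and of closed alternating cycles $\gamma_1,\dots,\gamma_k$; since consecutive cells of such a cycle are edge-adjacent in $\ZZ^2$, each $\gamma_j$ is an embedded lattice polygon, and, oriented by ``follow an $M_0$-edge, then an $M_1$-edge, and so on'', it has a well-defined turning number $\wind(\gamma_j)=\pm1$. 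First I would verify, from \eqref{eq:definitionOfPt}, that $P_t(1)=1$ and that $P_t'(1)=\sum_{j}\wind(\gamma_j)$, i.e.\ the number of counterclockwise cycles minus the number of clockwise ones.

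Second, I would compute $T^{\ez}(t)$. In a region of height $2$ the $\ez$-shade of a domino is tiny: empty for a vertical domino and for a top-floor domino, and, for a bottom-floor horizontal domino $d_0$, just the two cubes directly above $d_0$. Hence $\tau^{\ez}(d_0,d_1)\neq 0$ forces $d_0\in M_0$, $d_1\in M_1$ and the projections of $d_0$ and $d_1$ to share a cell --- that is, $d_0$ and $d_1$ are two consecutive dominoes of some $\gamma_j$ --- and then $\tau^{\ez}(d_0,d_1)=\pm\frac14$ if $\gamma_j$ turns at the shared cell and $0$ if it runs straight, the sign recording the direction of the turn. Summing $\tau^{\ez}$ first around each cycle (the signed quarter-turns total $\wind(\gamma_j)$) and then over all cycles yields $T^{\ez}(t)=\sum_j\wind(\gamma_j)=P_t'(1)$; here one only has to check that the sign convention in $\tau^{\ez}$ matches the one built into \eqref{eq:definitionOfPt}.

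Third, for the two remaining pretwists, inspecting $\det(\cdot,\cdot,\ex)$ shows that $\tau^{\ex}(d_0,d_1)$ can be nonzero only when one of $d_0,d_1$ is vertical and the other is a horizontal domino with long axis $\ey$ (a ``$y$-brick''), the two meeting a common row $\{y=\text{const}\}$ inside a common connected segment of that row in $\cD$; the value is then $\pm\frac14$, with a sign depending only on the parities of the two columns involved, on the floor of the $y$-brick, and on the side to which it points (symmetrically for $T^{\ey}$ with $x$-bricks). One must show this row-by-row signed count again equals $\sum_j\wind(\gamma_j)$. The route I would take is to observe that $T^{\ex}$, $T^{\ey}$ and $T^{\ez}$ all vanish on the all-vertical tiling, that each is invariant under every flip of a duplex region, and that all three change by the same amount under every trit: the flip step reduces to the three flip types of a duplex and rests on the cancellation between the two parallel bricks a flip produces (adjacent columns, opposite floors, hence opposite signs), and the trit step is a bounded local check. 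Since flips and trits connect all tilings of a duplex region \cite{primeiroartigo}, $T^{\ex}(t)=T^{\ey}(t)=T^{\ez}(t)$ follows, completing the proof.

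The self-contained, purely local step is the second one, where the thinness of $\cR$ makes the axial pretwist manifestly a function of the planar cycle data. The delicate point, and where I expect the real work, is the third step: the $\ex$- and $\ey$-shades are long, so the non-axial pretwists are not visibly determined by the planar cycles, and the dominoes mediating the $\tau^{\ex}$-interactions --- the vertical ones --- do not even lie on the cycles. One therefore has to carry through a careful parity bookkeeping (tracking the column parities, the floor, and the orientation of each brick, which are correlated along a cycle) to see the row contributions reassemble into turning numbers; equivalently, to establish the flip- and trit-invariance of $T^{\ex}-T^{\ez}$ and $T^{\ey}-T^{\ez}$, whose only genuinely nontrivial cases are the flips exchanging two vertical dominoes for two $y$-bricks and the trits, both demanding a patient case analysis.
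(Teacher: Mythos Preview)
Your computation of both $P_t'(1)$ and $T^{\ez}(t)$ drops the colour factor, and this is a genuine gap. From \eqref{eq:definitionOfPt} one gets
\[
P_t'(1)=\sum_{v}\ccol(v)\sum_{\gamma\not\ni v}\wind(\gamma,v)=\sum_{\gamma}\sum_{v\notin\gamma}\ccol(v)\wind(\gamma,v),
\]
the \emph{enclosed charge} of each cycle, not the turning number. Likewise, in your second step, the sign of $\tau^{\ez}(d_0,d_1)$ at a shared cell $v$ is determined by the geometric cross $\det(\tv(d_1),\tv(d_0),\ez)$ with the fixed order ``top, then bottom''; but along the cycle the roles of incoming and outgoing edge alternate between $d_0$ and $d_1$, so what $\tau^{\ez}$ records is $\angle(\gamma,v)\,\ccol(v)$, not $\angle(\gamma,v)$ alone. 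A single $2\times2$ cycle already shows the discrepancy: all four corners are left turns, yet the four values of $\tau^{\ez}$ are $-\tfrac14,+\tfrac14,+\tfrac14,-\tfrac14$ and sum to $0$, while the turning number is $\pm1$; and since no cell is interior to this cycle, $P_t'(1)=0$ here too. Thus neither $P_t'(1)$ nor $T^{\ez}(t)$ equals your proposed intermediate $\sum_j\wind(\gamma_j)$, and equating them requires the identity $\sum_{v\notin\gamma}\ccol(v)\wind(\gamma,v)=\sum_{v\in\gamma}\ccol(v)\angle(\gamma,v)$ --- exactly the content of the paper's Lemma~\ref{lemma:ptprimeEqualsTwist}, which your outline does not address.

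For the non-axial pretwists you reverse the paper's assessment of difficulty. You treat $T^{\ez}$ as the easy, local case and propose to reach $T^{\ex},T^{\ey}$ indirectly via flip/trit connectivity from \cite{primeiroartigo}; but that is precisely the nonconstructive route this note sets out to avoid. In fact the paper handles $T^{\ex}$ and $T^{\ey}$ directly and almost trivially (Lemma~\ref{lemma:pretwistsNotAxis}): the only pairs contributing to $T^{\vu}$ with $\vu\in\{\pm\ex,\pm\ey\}$ are a jewel $d_v$ and an edge $d$ of some cycle $\gamma$, and $2\sum_{d\in\gamma}\tau^{\vu}(d,d_v)$ is literally the signed crossing count of $\gamma$ with the ray $v+[0,\infty)\vu$, i.e.\ $\ccol(v)\wind(\gamma,v)$. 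The genuinely nontrivial comparison is the axial one, and it rests on the colour-weighted identity you omitted.
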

The equality $T^{\ex}(t) = T^{\ey}(t) = T^{\ez}(t)$ above is a special case of Proposition 3.3 in \cite{segundoartigo}, and this value is, by definition, the twist $\Tw(t)$. In this note, we give an independent proof of this equality in the particular case of duplex regions. 
In the aforementioned article, we present a different, shorter proof of the equality $P_t'(1) = \Tw(t)$ using the connectivity of the space of domino tilings of a duplex region by flips and trits. The proof here presented is longer, but more direct.  

The authors gratefully acknowledge the support from FAPERJ, CNPq and CAPES.
\section{Socks and winding numbers}
Let $\cR$ be a duplex region. Consider the undirected plane graph $G$ whose vertex set is 
$$\left\{\left(\lfloor x \rfloor, \lfloor y \rfloor \right) : (\plshalf{x},\plshalf{y},\plshalf{z}) \text{ is the center of a cube in } R \right\},$$
and where two vertices are joined by an edge if their Euclidean distance is $1$. A \emph{system of cycles}, or \emph{sock}, in $G$ is a directed subgraph of $G$ consisting solely of oriented simple cycles. A \emph{jewel} of a sock is a vertex of $G$ that is not contained in the sock. A vertex $v = (x,y) \in \ZZ^2$ is called \emph{white} (resp. \emph{black}) if $x + y$ is even (resp. \emph{odd}). We set $\ccol(v) = 1$ if $v$ is black, and $-1$ if it is white.

Each tiling $t$ of $\cR$ has a unique corresponding sock in $G$, where trivial cycles in the associated drawing of $t$ are represented as pairs of adjacent jewels: this is illustrated in Figure \ref{fig:tilingTwoFloors_withSOC}. Each sock may refer to a set of tilings, all in the same flip connected component (therefore, with the same $P_t'(1)$ and the same $\Tw(t)$).

\begin{figure}[ht]%
\centering
\includegraphics[width=0.6\columnwidth]{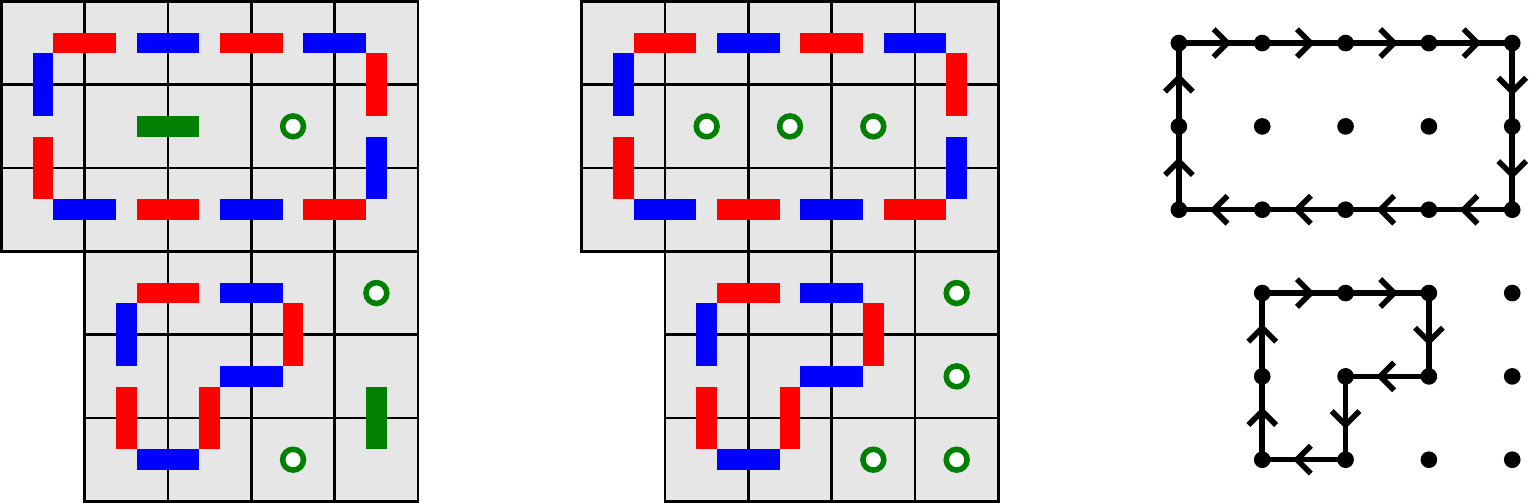}%
\caption{A tiling with two trivial cycles; the same tiling with the two trivial cycles flipped into jewels; and the sock that corresponds to both of them.}%
\label{fig:tilingTwoFloors_withSOC}%
\end{figure}

Let $t$ be a tiling of $\cR$, and let $s$ be its corresponding sock in $G$. For $p \in \RR^2$ and a cycle $\gamma$ of $s$, let $\wind(\gamma,p)$ be the winding number of $\gamma$, thought of as a curve in $\RR^2$, around $p$. Clearly we can write our invariant $P_t(q)$ as
\begin{equation}
P_t(q) = \sum_{v \in \ZZ^2} \ccol(v) q^{\sum_{\gamma, v \notin \gamma} \wind(\gamma,v)},
\label{eq:definitionOfPt}
\end{equation}
where the sum in the exponent of $q$ is taken over all the cycles in $s$ that do not contain $v$.

\begin{lemma}
\label{lemma:pretwistsNotAxis}
If $\cR$ is a $\ez$-duplex region with associated graph $G$, $\vu \in \{\pm \ex, \pm \ey\}$ and $t$ is a tiling of $\cR$ with corresponding sock $s$, then
$T^{\vu}(t) = P_t'(1)$.
\end{lemma}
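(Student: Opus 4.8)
The plan is to compute both sides directly from the sock $s$ of $t$. By the symmetries of $\cR$ it suffices to treat $\vu=\ex$: the case $\vu=-\ex$ is identical because $T^{\ex}$ and $T^{-\ex}$ collect the same unordered pairs of dominoes (with $d_0,d_1$ interchanged and a compensating sign), and $\vu=\pm\ey$ follows by the symmetry $x\leftrightarrow y$. Differentiating \eqref{eq:definitionOfPt} at $q=1$ and swapping the two sums gives
$$P_t'(1)=\sum_{\gamma\in s}\sum_{v\notin\gamma}\ccol(v)\,\wind(\gamma,v)=\sum_{\gamma\in s}\epsilon_\gamma\sum_{v\text{ inside }\gamma}\ccol(v),$$
where $\epsilon_\gamma\in\{\pm1\}$ records the orientation of the simple closed curve $\gamma$. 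The cycles of $s$ are pairwise vertex-disjoint, and each is an even closed walk in the bipartite graph $G$, so $\sum_{v\in\gamma'}\ccol(v)=0$ for every cycle $\gamma'$; since a cycle $\gamma'\ne\gamma$ lies wholly inside or wholly outside $\gamma$, only jewels contribute, giving
$$P_t'(1)=\sum_{j\text{ a jewel}}\ccol(j)\,W(j),\qquad W(j):=\sum_{\gamma\in s}\wind(\gamma,j).$$

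For $T^{\ex}$, use that $T^{\ex}$ is flip-invariant and that all tilings with sock $s$ are flip-connected to replace $t$ by the tiling $t_0$ (again with sock $s$) carrying a vertical domino over every jewel and realizing each cycle by the alternating-floor chain of horizontal dominoes along it. In $t_0$, a pair $(d_0,d_1)$ contributes to $T^{\ex}$ only when $\{\tv(d_0),\tv(d_1)\}=\{\pm\ez,\pm\ey\}$, i.e.\ one of them is a vertical domino $V$ over a jewel $v_V$ and the other a domino $H$ oriented along $\ey$ (an edge of some cycle joining two columns that differ in the $y$-coordinate). Because the open $\ex$-shade of a domino is a union of half-lines pointing in $+\ex$, for such a pair at most one of ``$H\subset\cS^{\ex}(V)$'' and ``$V\subset\cS^{\ex}(H)$'' holds, and one does hold exactly when $v_V$ lies in a row met by $H$ and $H$ lies strictly to one side of $V$ along that row. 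Writing $\tv(V)=-\ccol(v_V)\,\ez$ and $\tv(H)=s_H\,\ey$ with $s_H\in\{\pm1\}$, and evaluating $\det(\pm\ey,\pm\ez,\ex)$, the contribution of such a pair $\{V,H\}$ is $-\tfrac14 s_H\,\ccol(v_V)$ when $H$ is on the $+\ex$ side of $V$ and $+\tfrac14 s_H\,\ccol(v_V)$ when it is on the $-\ex$ side, so
$$T^{\ex}(t_0)=-\tfrac14\sum_{V}\ccol(v_V)\Big(\sum_{H\text{ right of }V}s_H-\sum_{H\text{ left of }V}s_H\Big),$$
the inner sums over the $\ey$-dominoes $H$ meeting the row of $v_V$ that are $\ex$-visible from $V$ on the indicated side.

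It remains to show that the parenthesis equals $-4\,W(v_V)$ for every jewel $V$. I evaluate $W(v_V)=\sum_\gamma\wind(\gamma,v_V)$ by the ray method, using the four half-lines from $v_V$ in the directions $\pm\ex$, each tilted slightly up or down to miss every vertex of $s$. Such a ray meets $s$ only at $\ey$-dominoes $H$ that meet the row of $v_V$ on the appropriate side of $V$ (the tilt selecting whether the second cube of $H$ lies in the row above or below); moving the test point from $v_V$ out to infinity, $\sum_\gamma\wind(\gamma,\cdot)$ changes from $W(v_V)$ to $0$ by $\pm1$ at each such crossing, so each ray presents $W(v_V)$ as a signed count of these $H$. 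One checks that the sock's orientation convention makes every cycle run along each of its edges in the direction $-\tv$ (from the white-cube column to the black-cube column of that floor), so for an $\ey$-edge $H$ the traversal direction is $-s_H\ey$, regardless of floor; hence each ray's count equals $\mp\sum_H s_H$ over the $\ey$-dominoes meeting one fixed row adjacent to $v_V$ on one fixed side of $V$. Adding the two right-going identities yields $\sum_{H\text{ right}}s_H=-2\,W(v_V)$ and the two left-going ones $\sum_{H\text{ left}}s_H=2\,W(v_V)$; substituting, $T^{\ex}(t_0)=\sum_V\ccol(v_V)\,W(v_V)=P_{t_0}'(1)=P_t'(1)$, the last step because \eqref{eq:definitionOfPt} depends on $t$ only through $s$.

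The substance of the proof, and its only delicate point, is the sign bookkeeping just sketched: the cycle orientations $\epsilon_\gamma$, the column colours, the signs in $\tv(V)$ and $\tv(H)$, the direction each cycle runs along its $\ey$-edges, and the orientation-dependence of the four ray-counts must be kept mutually consistent, so that the factors $-\tfrac14$ and $-4$ combine to $+1$ rather than $-1$. A secondary technical point is to verify, when $\cD$ is not convex, that the $\ex$-visibility condition in the definition of $\cS^{\ex}$ really does match the set of ray-crossings used above; this rests on the fact that every cycle of $s$ bounds a disk contained in $\cD$, which holds because $\cD$ is simply connected.
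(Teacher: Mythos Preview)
Your approach and the paper's are at heart the same: both isolate the contributing pairs in $T^{\vu}$ as (vertical domino over a jewel, $\ey$-domino on a cycle), and both identify the resulting sum with $\sum_{\gamma,v}\ccol(v)\wind(\gamma,v)$ by reading off winding numbers as signed counts of ray crossings. The paper, however, does this in two lines: it observes that two horizontal dominoes have zero effect along $\vu$ (so the reduction to jewel--cycle pairs holds for \emph{every} tiling $t$, with no need to pass to a canonical $t_0$), and it uses the single half-line $v+[0,\infty)\vu$, obtaining directly $\ccol(v)\wind(\gamma,v)=2\sum_{d\in\gamma}\tau^{\vu}(d,d_v)=2\sum_{d\in\gamma}\tau^{\vu}(d_v,d)$; the factor $2$ absorbs exactly what your four tilted rays separate out.

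Two remarks on your extra steps. First, your appeal to flip-invariance of $T^{\ex}$ is unnecessary and, in the spirit of this note, risky: the note is reproving $T^{\ex}=T^{\ey}=T^{\ez}$ independently of Proposition~3.3 in \cite{segundoartigo}, and flip-invariance of an individual pretwist is not something you have in hand a priori. Fortunately you do not need it---the observation that horizontal--horizontal pairs contribute nothing already makes the computation work for arbitrary $t$, so $t_0$ never has to enter. Second, your preliminary reduction of $P_t'(1)$ to a sum over \emph{jewels} (using that each cycle $\gamma'\subset s$ has $\sum_{v\in\gamma'}\ccol(v)=0$ and lies entirely inside or outside any other cycle) is a genuine point that the paper passes over quickly; making it explicit is a virtue of your write-up.

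Your concern about non-convex $\cD$ is not an issue: the shade $\cS^{\vu}(d)$ is defined in $\RR^3$ regardless of $\cR$, and the ray used to compute $\wind(\gamma,v)$ lives in $\RR^2$; since $\gamma\subset\cD$, all crossings occur at dominoes of $t$ and the two counts match automatically.
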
 
\begin{proof}
Two dominoes that are not parallel to $\ez$ have no effect along $\vu$ on one another. Therefore, we only consider pairs of dominoes where one is parallel to $\ez$, that is, refers to a jewel of $s$.

If $\gamma$ is a cycle of $s$ and $v$ is a jewel, one way of computing $\wind(\gamma,v)$ is to count (with signs) the intersections of $\gamma$ with the half-line $v +  [0,\infty) \vu$. Thus, if $d_v$ denotes the domino containing $v$ and $d \in \gamma$ means that $d$ refers to an edge of $\gamma$, then
$\ccol(v)\wind(\gamma,v) = 2 \sum_{d \in \gamma} \tau^{\vu}(d,d_v) = 2 \sum_{d \in \gamma} \tau^{\vu}(d_v,d).$
Thus, $$P_t'(1) = \sum_{\gamma, v} \ccol(v)\wind(\gamma,v) = \sum_{\substack{\gamma,v\\d \in \gamma}} (\tau^{\vu}(d,d_v)+ \tau^{\vu}(d_v,d)) = T^{\vu}(t),$$
completing the proof.
\end{proof}
\section{Charges and weights}
We now consider $T^{\ez}$. Again, let $t$ be a tiling of a duplex region with corresponding sock $s$.
Let the \emph{charge enclosed} by a cycle $\gamma$ of $s$ be $$\charge_{\interior}(\gamma) = \sum_{v \notin \gamma} \ccol(v)\wind(\gamma,v),$$
so that  
$P_t'(1) = \sum_{\gamma \scalebox{0.7}{\mbox{ cycle of }} s} \charge_{\interior}(\gamma).$
Charges can be looked at from a point of view that is more interesting for our purposes.  Given $v \in \RR^2$, consider the set of four points $\neighbor_v = \{v + (\frac{k}{2}, \frac{l}{2}) | k,l \in \{-1, 1\}\},$ i.e., the set of points of the form $v + (\pm \half, \pm \half)$. The \emph{metric weight} of a vertex $v \in \ZZ^2$ with respect to a cycle $\gamma$ of $s$ is given by 
$$ \metricw{\gamma}{v} = \frac{1}{4} \sum_{u \in \neighbor_v} \wind(\gamma,u),$$
while the \emph{topological weight} $\topw{\gamma}{v}$ of $v$ is the (arithmetic) average of the set $\wind(\gamma,\neighbor_v) = \{\wind(\gamma,u) | u \in \neighbor_v\}$ (see Figure \ref{fig:topAndMetricWeights}). 

\begin{lemma}
\label{lemma:interiorCharge}
$$\charge_{\interior}(\gamma) = \sum_{v \in \ZZ^2} \ccol(v)\topw{\gamma}{v}.$$
\end{lemma}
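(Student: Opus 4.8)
The plan is to evaluate $\sum_{v\in\ZZ^2}\ccol(v)\topw{\gamma}{v}$ by separating the vertices that lie on $\gamma$ from those that do not. First I would observe that if $v\notin\gamma$ (whether $v$ is a jewel, a vertex of some other cycle of $s$, or not a vertex of $G$ at all), then the closed unit square centered at $v$ meets the curve $\gamma$, if at all, only along edges of $G$ incident to $v$ --- and none of those edges lies on $\gamma$. Hence that square lies in a single component of $\RR^2\setminus\gamma$, so $\wind(\gamma,\cdot)$ is constant and equal to $\wind(\gamma,v)$ on it; in particular $\topw{\gamma}{v}=\wind(\gamma,v)$. Therefore $\sum_{v\notin\gamma}\ccol(v)\topw{\gamma}{v}=\charge_{\interior}(\gamma)$, and it remains to show that $\sum_{v\in\gamma}\ccol(v)\topw{\gamma}{v}=0$.

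The key point is that $\topw{\gamma}{v}$ is the \emph{same} for every vertex $v$ on $\gamma$. Since $\gamma$ is a simple closed curve, $\wind(\gamma,\cdot)$ is $0$ on the unbounded complementary region and equal to a constant $\varepsilon_\gamma\in\{-1,+1\}$ on the bounded one. At a vertex $v$ of $\gamma$, exactly two edges of $G$ at $v$ belong to $\gamma$; these two edges cut a neighborhood of $v$ into two sectors, one lying inside $\gamma$ and one outside. A short case analysis --- the two edges are opposite, giving two half-planes, or they are perpendicular, giving a $90^\circ$ sector and a $270^\circ$ sector --- shows that the four points of $\neighbor_v$ always meet both sectors. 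Hence $\wind(\gamma,\neighbor_v)=\{0,\varepsilon_\gamma\}$, so $\topw{\gamma}{v}=\varepsilon_\gamma/2$ for every vertex $v$ on $\gamma$.

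To conclude, I would use that $G$ is bipartite with respect to $\ccol$, so $\gamma$ is an even cycle with as many black vertices as white ones; thus $\sum_{v\in\gamma}\ccol(v)=0$ and $\sum_{v\in\gamma}\ccol(v)\topw{\gamma}{v}=\frac{\varepsilon_\gamma}{2}\sum_{v\in\gamma}\ccol(v)=0$. Combining this with the first paragraph gives $\sum_{v\in\ZZ^2}\ccol(v)\topw{\gamma}{v}=\charge_{\interior}(\gamma)$, as claimed. The main obstacle is the local analysis at the vertices of $\gamma$: one must check carefully that the two edges of $\gamma$ meeting at $v$ separate the four points of $\neighbor_v$ exactly into the two sides of $\gamma$ near $v$ --- with no third winding value appearing among them --- which is precisely where the simplicity of $\gamma$ and the grid geometry are used.
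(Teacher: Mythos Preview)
Your proof is correct and follows essentially the same approach as the paper's: both split the sum into $v\notin\gamma$ (where $\topw{\gamma}{v}=\wind(\gamma,v)$) and $v\in\gamma$ (where $\topw{\gamma}{v}=\pm\tfrac12$ depending on orientation), and then use that $\gamma$ has equally many black and white vertices to kill the on-cycle contribution. Your version supplies more justification for the case analysis that the paper simply asserts.
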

\begin{proof}
Notice that
$$ \topw{\gamma}{v} = \begin{cases} \wind(\gamma,v), &\text{if } v \notin \gamma,\\  
 \half, &\text{if } v \in \gamma \text{ and $\gamma$ is counterclockwise oriented,}\\
-\half, &\text{if } v \in \gamma \text{ and $\gamma$ is clockwise oriented.}\end{cases}$$
In particular, $\sum_{v \in \gamma}\topw{\gamma}{v} \ccol(v) = \pm \half \sum_{v \in \gamma}\ccol(v) = 0.$ Hence, 
$$\charge_{\interior}(\gamma) = \sum_{v \in \ZZ^2} \ccol(v)\topw{\gamma}{v}.$$
\end{proof}

\begin{figure}%
\centering
\def\svgwidth{0.7\columnwidth}
\def\myScaleVar{0.8}
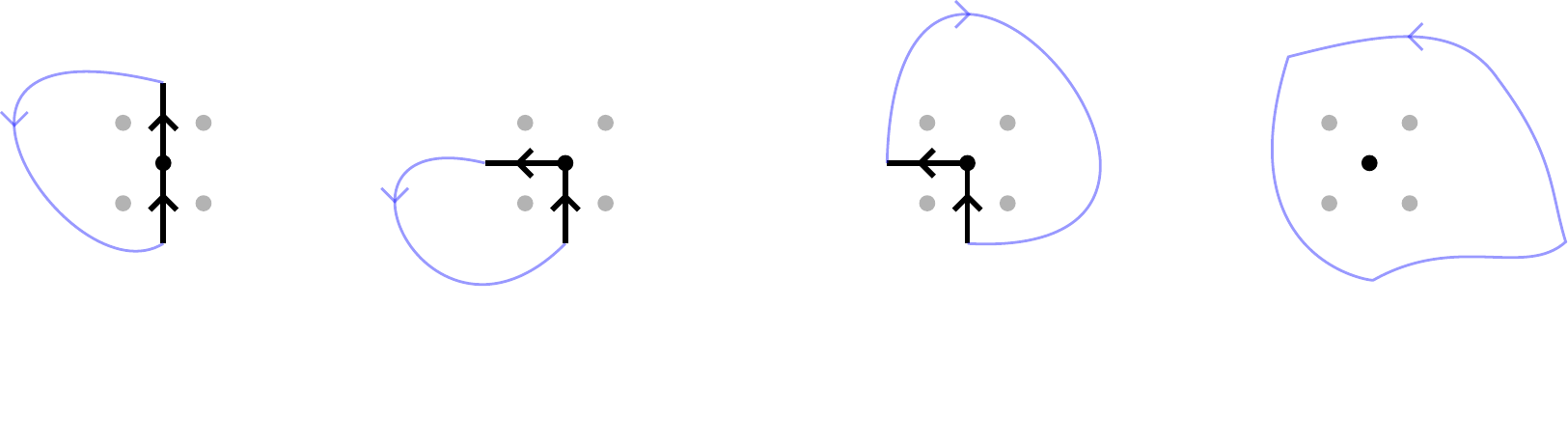%
\caption{Illustration of topological and metric weights. The points in $\neighbor_v$ are in grey.}%
\label{fig:topAndMetricWeights}%
\def\myScaleVar{1}
\end{figure}

If $s$ is the corresponding sock of a tiling $t$ and $\gamma$ is a cycle of $s$, then the angle $\angle(\gamma,v)$ of a vertex $v \in \gamma$ is the difference between the angle of the edge of $\gamma$ leaving $v$ and the angle of the edge of $\gamma$ entering $v$, counted in counterclockwise laps. In other words, a vertex $v$ where the curve goes straight has angle $0$, whereas a vertex where a left (resp. right) turn occurs has angle $1/4$ (resp. $-1/4$), as shown in Figure \ref{fig:angles}.
\begin{figure}[ht]%
\centering
\def\svgwidth{0.5\columnwidth}
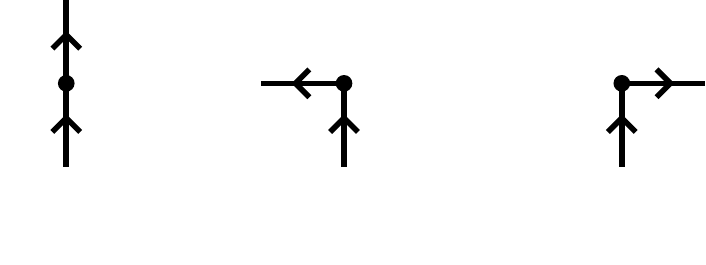
\caption{Illustration of the angle of a vertex.}%
\label{fig:angles}%
\end{figure}

The \emph{boundary charge} of a curve $\gamma$ is $\charge_{\partial}(\gamma) = \sum_{v \in \gamma} \angle(\gamma,v) \ccol(v)$. Notice that 
\begin{equation}
T^{\ez}(t) = \sum_{\gamma \scalebox{0.7}{\mbox{ cycle of }} s} \charge_{\partial}(\gamma).
\label{eq:ezpretwist}
\end{equation} 
We now set out to prove that, for each $\gamma$, $\charge_{\partial}(\gamma) = \charge_{\interior}(\gamma)$, which will complete the proof of Proposition \ref{prop:duplexes}.
\begin{lemma}
\label{lemma:metricColorZero}
For each cycle $\gamma$ of $s$, 
$$\sum_{v \in \ZZ^2} \metricw{\gamma}{v} \ccol(v) = 0.$$
\end{lemma}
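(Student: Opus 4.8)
The plan is to expand the definition of the metric weight and then interchange the order of summation, passing from a sum over the lattice points $v \in \ZZ^2$ to a sum over the half‑integer points $u$ at which winding numbers are evaluated. Everything in sight is a finite sum: a cycle $\gamma$ of $s$ is a bounded simple closed curve in the plane, so $\wind(\gamma,u)=0$ for all but finitely many $u\in\RR^2$, and hence $\metricw{\gamma}{v}=0$ for all but finitely many $v$ (as soon as all four points of $\neighbor_v$ lie in the unbounded complementary component of $\gamma$, the weight vanishes). Thus the reordering below is legitimate:
$$\sum_{v \in \ZZ^2} \metricw{\gamma}{v}\,\ccol(v) = \frac14 \sum_{v \in \ZZ^2} \ccol(v) \sum_{u \in \neighbor_v} \wind(\gamma,u) = \frac14 \sum_{u \in (\ZZ+\half)^2} \wind(\gamma,u) \Bigl( \sum_{\substack{v \in \ZZ^2\\ u \in \neighbor_v}} \ccol(v) \Bigr),$$
where only finitely many $u$ contribute to the outer sum.

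Next I would evaluate the inner sum $\sum_{v:\,u\in\neighbor_v}\ccol(v)$ for a fixed half‑integer point $u$. Writing $u=(a+\half,\,b+\half)$ with $a,b\in\ZZ$, one has $u\in\neighbor_v$ exactly for $v\in\{u+(\pm\half,\pm\half)\}=\{(a,b),\,(a{+}1,b),\,(a,b{+}1),\,(a{+}1,b{+}1)\}$, i.e.\ $v$ ranges over the four corners of the unit square centered at $u$. Diagonally opposite corners of such a square share a color while edge‑adjacent corners have opposite colors, so exactly two of these vertices are black and two are white; hence $\sum_{v:\,u\in\neighbor_v}\ccol(v)=0$. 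Substituting this into the displayed identity annihilates every term of the outer sum, which gives $\sum_{v\in\ZZ^2}\metricw{\gamma}{v}\,\ccol(v)=0$.

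I expect no serious obstacle here: the computation is essentially a Fubini argument combined with an elementary parity observation. The only points requiring a little care are the bookkeeping in the change of summation order — verifying that each half‑integer point $u$ is associated precisely with the four surrounding lattice points of $\ZZ^2$ — and the (routine) remark that all sums involved have only finitely many nonzero terms, so that the interchange is justified.
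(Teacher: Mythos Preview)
Your proof is correct and is essentially the same as the paper's own one-line argument: both swap the order of summation to rewrite the sum over $v\in\ZZ^2$ as a sum over half-integer points $u\in\ZZ^2+(\half,\half)$, and then observe that the four lattice points in $\neighbor_u$ have colors summing to zero. Your version is more detailed, explicitly justifying the finiteness that makes the interchange valid and spelling out the parity check, but the idea is identical.
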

\begin{proof}
$$\sum_{v \in \ZZ^2} \metricw{\gamma}{v} \ccol(v) = \sum_{u \in \ZZ^2 + (\half,\half)} \left(\frac{1}{4} \wind(\gamma,u) \sum_{v \in \neighbor_u} \ccol(v)\right) = 0.$$  
\end{proof}

\begin{lemma}
\label{lemma:topMinusMetric}
If $v \in \ZZ^2$, 
$$\topw{\gamma}{v} - \metricw{\gamma}{v} = \begin{cases} \angle(\gamma,v), &\text{if } v \in \gamma,\\
0, &\text{otherwise.} \end{cases} $$
\end{lemma}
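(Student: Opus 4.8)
The plan is to reduce the statement to the elementary fact that the winding number $\wind(\gamma,\cdot)$ is locally constant on $\RR^2\setminus\gamma$ and increases by exactly $1$ when a point crosses $\gamma$ transversally from its right-hand side to its left-hand side (with respect to the orientation of $\gamma$). Since the four points of $\neighbor_v$ are the centers of the four unit lattice squares having $v$ as a corner, none of them lies on $\gamma$, so the four numbers $\wind(\gamma,u)$ with $u \in \neighbor_v$ are well defined, and the identity to be proved becomes a purely local computation around $v$.

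First I would dispose of the case $v \notin \gamma$. Then no edge of $\gamma$ is incident to $v$, so a small disc centered at $v$ meets $\gamma$ nowhere; the same holds for each of the four segments joining $v$ to a point of $\neighbor_v$, since such a segment is a half-diagonal of a face and meets the grid only at $v$. Hence $v$ and all of $\neighbor_v$ lie in a single component of $\RR^2 \setminus \gamma$, so $\wind(\gamma,u) = \wind(\gamma,v) =: w_0$ for every $u \in \neighbor_v$. Then $\metricw{\gamma}{v} = w_0$, and since $\topw{\gamma}{v} = \wind(\gamma,v)$ for $v \notin \gamma$ (as recalled in the proof of Lemma~\ref{lemma:interiorCharge}), also $\topw{\gamma}{v} = w_0$, so the difference vanishes.

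Next I would handle the case $v \in \gamma$. As $\gamma$ is a simple cycle, exactly two of the (at most four) grid edges at $v$ lie on $\gamma$; they are either opposite, whence $\angle(\gamma,v) = 0$, or adjacent, whence $\angle(\gamma,v) = \pm\tfrac14$. These two edges split $\neighbor_v$ into the faces lying locally on the left of $\gamma$ and those lying on the right, and comparing winding numbers along a short arc around $v$ shows that passing from one side to the other crosses $\gamma$ once whereas staying on one side avoids $\gamma$. If the two edges are opposite, each side contains two of the faces, so the multiset $\{\wind(\gamma,u) : u \in \neighbor_v\}$ equals $\{w,w,w-1,w-1\}$ for some integer $w$; hence $\metricw{\gamma}{v} = \topw{\gamma}{v} = w - \tfrac12$ and the difference is $0 = \angle(\gamma,v)$. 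If the two edges are adjacent, one side is the single face lying in the $90^\circ$ wedge between them and the other side consists of the remaining three faces; checking the local pictures of Figure~\ref{fig:angles} against both directions of traversal shows that this single face carries the higher of the two values precisely at a left turn, so its winding number is $w + 4\angle(\gamma,v)$, where $w$ is the common winding number of the other three faces. Therefore $\metricw{\gamma}{v} = \tfrac14\bigl(3w + (w + 4\angle(\gamma,v))\bigr) = w + \angle(\gamma,v)$, whereas $\topw{\gamma}{v}$ is the average of the two-element set $\{w,\, w + 4\angle(\gamma,v)\}$, namely $w + 2\angle(\gamma,v)$; subtracting yields $\topw{\gamma}{v} - \metricw{\gamma}{v} = \angle(\gamma,v)$.

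The only step that takes genuine care — and the one I expect to be the real obstacle — is the sign bookkeeping in the adjacent-edges case: one must verify, consistently with the convention that the winding number grows to the left of the directed curve, that the face in the inner $90^\circ$ wedge of a left turn carries the larger of the two winding values (and symmetrically for a right turn), so that it equals $w + 4\angle(\gamma,v)$ rather than $w - 4\angle(\gamma,v)$. This amounts to a finite inspection of the three local configurations of Figure~\ref{fig:angles} under both orientations, precisely what Figure~\ref{fig:topAndMetricWeights} illustrates; everything else is routine averaging.
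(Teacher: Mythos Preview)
Your proposal is correct and follows essentially the same approach as the paper: a case analysis on whether $v\in\gamma$ and, if so, whether $\gamma$ goes straight, turns left, or turns right at $v$, computing in each case the multiset of winding numbers at the four points of $\neighbor_v$. The paper's proof is terser (it simply lists the multisets $\{k,k,k{+}1,k{+}1\}$ and $\{k,k,k,k{+}1\}$ and reads off the answer), while you spell out the left/right sign bookkeeping more carefully, but the argument is the same.
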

\begin{proof}
Figure \ref{fig:topAndMetricWeights} illustrates most of the elements needed in this proof. If $v \notin \gamma$, then $\wind(\gamma,u) = \wind(\gamma,v)$ for each $u \in \neighbor_v$, so $\metricw{\gamma}{v} = \topw{\gamma}{v} = \wind(\gamma,v).$ 

If $v \in \gamma$ and the curve goes straight at $v$, then, for some $k$, two points in $\neighbor_v$ have winding number $k$ and the other two have winding number $k+1$, so $\metricw{\gamma}{v} = \frac{1}{4}(k + k + (k+1) + (k+1)) = \half(k + (k+1)) = \topw{\gamma}{v}.$

If $v \in \gamma$ and the curves turns left at $v$, then the winding numbers are $k,k,k,k+1$, so $\topw{\gamma}{v} - \metricw{\gamma}{v} = 1/4 = \angle(\gamma,v)$. Analogously, $\topw{\gamma}{v} - \metricw{\gamma}{v} = - 1/4 = \angle(\gamma,v)$ if $\gamma$ turns right at $v$.
\end{proof}

\begin{lemma}
\label{lemma:ptprimeEqualsTwist}
For each cycle $\gamma$ of $s$, $\charge_{\partial}(\gamma) = \charge_{\interior}(\gamma)$.
\end{lemma}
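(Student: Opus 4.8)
The plan is to combine the three preceding lemmas in a single short computation, with no new geometric input. Starting from Lemma~\ref{lemma:interiorCharge}, I would split the topological weight as the metric weight plus the difference:
$$\charge_{\interior}(\gamma) = \sum_{v \in \ZZ^2} \ccol(v)\topw{\gamma}{v} = \sum_{v \in \ZZ^2} \ccol(v)\bigl(\topw{\gamma}{v} - \metricw{\gamma}{v}\bigr) + \sum_{v \in \ZZ^2} \ccol(v)\metricw{\gamma}{v}.$$

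First I would observe that all the sums in question are finite: since $\gamma$ is a simple closed curve in $\RR^2$, we have $\wind(\gamma,p) = 0$ for every $p$ outside a bounded set, hence $\topw{\gamma}{v} = \metricw{\gamma}{v} = 0$ for all but finitely many $v \in \ZZ^2$. This makes the rearrangement above legitimate and all three displayed sums well-defined.

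Then I would invoke the two remaining lemmas. By Lemma~\ref{lemma:metricColorZero}, the second sum on the right-hand side is zero. By Lemma~\ref{lemma:topMinusMetric}, the summand of the first sum equals $\ccol(v)\angle(\gamma,v)$ when $v \in \gamma$ and vanishes otherwise, so that first sum is exactly $\sum_{v \in \gamma} \ccol(v)\angle(\gamma,v) = \charge_{\partial}(\gamma)$ by the definition of the boundary charge. Combining, $\charge_{\interior}(\gamma) = \charge_{\partial}(\gamma)$.

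Since the argument is pure bookkeeping with the earlier lemmas, I do not expect any genuine obstacle; the only point requiring a word of care is the finiteness remark that justifies splitting the sum. To close the loop I would then note that summing this identity over all cycles $\gamma$ of $s$, together with \eqref{eq:ezpretwist}, the formula $P_t'(1) = \sum_{\gamma} \charge_{\interior}(\gamma)$, and Lemma~\ref{lemma:pretwistsNotAxis}, yields $P_t'(1) = T^{\ex}(t) = T^{\ey}(t) = T^{\ez}(t)$ and hence completes the proof of Proposition~\ref{prop:duplexes}.
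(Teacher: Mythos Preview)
Your proof is correct and is essentially the same as the paper's: both combine Lemmas~\ref{lemma:interiorCharge}, \ref{lemma:metricColorZero}, and \ref{lemma:topMinusMetric} by writing $\topw{\gamma}{v}$ as $\metricw{\gamma}{v}$ plus the difference, applying Lemma~\ref{lemma:metricColorZero} to kill the metric sum, and identifying the remaining sum with $\charge_{\partial}(\gamma)$ via Lemma~\ref{lemma:topMinusMetric}. The only differences are cosmetic---you start from $\charge_{\interior}$ rather than $\charge_{\partial}$ and you add an explicit finiteness remark---and your final paragraph anticipates the paper's separate proof of Proposition~\ref{prop:duplexes}.
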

\begin{proof}
By Lemma \ref{lemma:topMinusMetric}, the boundary charge of $\gamma$ can also be written as 
\begin{align*}
\charge_{\partial}(\gamma) &= \sum_{v \in \gamma} \angle(\gamma,v) \ccol(v) \\
													 &= \sum_{v \in \ZZ^2} (\topw{\gamma}{v} - \metricw{\gamma}{v})\ccol(v)\\
                           &=\sum_{v \in \ZZ^2} \topw{\gamma}{v}\ccol(v) - \sum_{v \in \ZZ^2}\metricw{\gamma}{v}\ccol(v)\\
													&= \sum_{v \in \ZZ^2} \topw{\gamma}{v}\ccol(v) = \charge_{\interior}(\gamma),
\end{align*}
the fourth equality holding because of Lemma \ref{lemma:metricColorZero}; the last equality is Lemma \ref{lemma:interiorCharge}.
\end{proof}

\begin{proof}[Proof of Proposition \ref{prop:duplexes}]

Lemma \ref{lemma:ptprimeEqualsTwist} and Equation \eqref{eq:ezpretwist} imply that $P_t'(1) = T^{\ez}(t)$. Together with Lemma \ref{lemma:pretwistsNotAxis}, this proves the result.
\end{proof}

\begin{rem}
Proposition \ref{prop:duplexes} and Proposition 3.6 in \cite{segundoartigo} establish Remark 10.1 in \cite{primeiroartigo}. 
\end{rem}

\bibliography{biblio}{}

\begin{thebibliography}{1}

\bibitem{segundoartigo}
Pedro~H Milet and Nicolau~C Saldanha.
\newblock Domino tilings of three-dimensional regions: flips, trits and twists.
\newblock {\em arXiv preprint arXiv:1410.7693}, 2014.

\bibitem{primeiroartigo}
Pedro~H Milet and Nicolau~C Saldanha.
\newblock Flip invariance for domino tilings of three-dimensional regions with
  two floors.
\newblock {\em arXiv preprint arXiv:1404.6509}, 2014.

\end{thebibliography}
\bibliographystyle{plain}

\noindent
\footnotesize
Departamento de Matem\'atica, PUC-Rio \\
Rua Marqu\^es de S\~ao Vicente, 225, Rio de Janeiro, RJ 22451-900, Brazil \\
\url{milet@mat.puc-rio.br}\\
\url{saldanha@puc-rio.br}

\end{document}